\newcommand{\bbG}{{\mathbb G}}
\newcommand{\cF}{{\mathcal F}}
\newcommand{\N}{\mathbb N}
\newcommand{\Z}{\mathbb Z}
\newcommand{\Q}{\mathbb Q}
\newcommand{\R}{\mathbb R}
\newcommand{\ra}{\rightarrow}
\newcommand{\lra}{\longrightarrow}
\newcommand{\Ra}{\Rightarrow}
\renewcommand{\1}{\mathbbm{1}}
\newcommand{\inv}{^{-1}}
\newcommand{\g}{\gamma}
\newcommand{\G}{\Gamma}
\DeclareMathOperator{\rk}{rk}
\DeclareMathOperator{\id}{id}
\DeclareMathOperator{\Aut}{Aut}
\DeclareMathOperator{\Ext}{Ext}
\DeclareMathOperator{\Int}{Int}
\DeclareMathOperator{\im}{im}
\DeclareMathOperator{\lex}{lex}
\DeclareMathOperator{\Hom}{Hom}
\DeclareMathOperator{\End}{End}
\newcommand{\vAut}{v\text{-}\Aut}
\newcommand{\oAut}{o\text{-}\Aut}
\newcommand{\adAut}{\Aut^S}
\newcommand{\GG}{\bbG}
\DeclareMathOperator*\HH{\mathbf{H}}
\DeclareMathOperator*\supp{supp}
\renewcommand{\epsilon}{\varepsilon}
\renewcommand{\phi}{\varphi}
\renewcommand{\theta}{\vartheta}
\theoremstyle{definition}
\newtheorem{defn}{Definition}[section]
\theoremstyle{plain}
\newtheorem{lemma}[defn]{Lemma}
\theoremstyle{plain}
\newtheorem*{lemma*}{Lemma}
\theoremstyle{plain}
\newtheorem{prop}[defn]{Proposition}
\theoremstyle{plain}
\theoremstyle{plain}
\newtheorem*{prop*}{Proposition}
\newtheorem{theorem}[defn]{Theorem}
\theoremstyle{plain}
\newtheorem*{teorema*}{Theorem}
\theoremstyle{plain}
\newtheorem{cor}[defn]{Corollary}
\theoremstyle{plain}
\newtheorem*{cor*}{Corollary}
\theoremstyle{definition}
\newtheorem{es}[defn]{Example}
\theoremstyle{definition}
\theoremstyle{plain}
\theoremstyle{definition}
\theoremstyle{definition}
\newtheorem{rmk}[defn]{Remark}
\theoremstyle{plain}
\theoremstyle{plain}
\newtheorem*{notation*}{Notation}
\theoremstyle{plain}
\newtheorem{notation}[defn]{Notation}
\lbrace\begin{array}{@{}l@{}}}%
\newcommand{\vertiii}[1]{{\left\vert\kern-0.25ex\left\vert\kern-0.25ex\left\vert #1 
    \right\vert\kern-0.25ex\right\vert\kern-0.25ex\right\vert}}
\newcounter{nootje}
\author{Salma Kuhlmann}
\author{Michele Serra}
\title{Automorphisms of valued Hahn groups}
\begin{document}
\maketitle

\begin{abstract}
Let $ \Gamma $ be a totally ordered set and for all $ \gamma\in\Gamma $ let $ A_\gamma $ be an abelian group. The support of an element $ a=(a_\gamma)_{\gamma\in\Gamma}\in \prod_{\Gamma}A_\gamma $ is the set $ \mathrm{supp} (a) = \{\gamma:a_\gamma\neq 0\}. $
The Hahn sum $ \coprod_\Gamma A_\gamma $ (resp. Hahn product $ \mathbf{H}_\Gamma A_\gamma $) is the subgroup consisting of elements with finite (resp. well ordered support).
By a Hahn group we mean a group $ G $ such that $ \coprod_\Gamma A_\gamma \leq G \leq \mathbf{H}_\Gamma A_\gamma$. If for all $ \gamma\in\Gamma $,  $ A_\gamma $ is an ordered abelian group we endow $ \mathbf{H}_\Gamma A_\gamma $ with the lexicographic order.
Hahn groups, endowed with the canonical valuation $ v = v_{\min} $, play a fundamental role
in the classiﬁcation of valued groups. In this paper we study the group $ v\text{-}\mathrm{Aut} G $ (resp. $ o\text{-}\mathrm{Aut} G$) of valuation (resp. order)
preserving automorphisms of a Hahn group $ G$. Under
the assumption that $ G $ satisﬁes the \textit{canonical lifting property} we
prove a structure theorem decomposing $ v\text{-}\mathrm{Aut} G $ into a semidirect product of two notable subgroups: the subgroups of  \textit{internal} and \textit{external} automorphisms. We characterise
a class of groups satisfying the aforementioned lifting
property. For some special cases we provide matrix descriptions.
\end{abstract}

%\tableofcontents

\section{Introduction}\label{sec:introduction}

In \cite{conrad}, Conrad describes the group of order preserving automorphisms of a Hahn sum as a certain group of matrices. Droste and G\"obel \cite{droste-goebel} extend this approach to balanced Hahn sum (see Definition \ref{def-hahn-sum-prod}). Hofberger \cite{hofbergerthesis} describes the group of automorphisms of Hahn fields as a semidirect product of so-called internal and external automorphisms. Building up on Hofberger's work we completed the description of the group of valuation preserving automorphisms of Hahn fields \cite{ours-fields}.

In this paper we extend the approach of \cite{ours-fields} to the study of $ \vAut G $ and $ \oAut G $ for a Hahn group $ G $.
We introduce the notion of (canonical) lifting property for a general Hahn group and, based on \cite{KKS-rayner-structures}, we prove that this is satisfied by a relatively large class of Hahn groups. We then decompose $ \vAut G $ (resp. $ \oAut G $) for Hahn groups satisfying the canonical lifting property. This extends the main result obtained in \cite{droste-goebel} (for the special case of balanced Hahn sums), to this large class of Hahn groups.

The paper is organised as follows.
In Section~\ref{sec:definitions-and-notation} we provide the definitions and notations that will be needed. In Section~\ref{sec:the-lifting-property} we introduce the important notion of \email{lifting property}. Subsection~\ref{sec:lifting-property-with-respect-to-the-rank} focusses on order preserving automorphisms and on the lifting property with respect to the rank (Definition~\ref{def:lifting-property-rank}). Theorem~\ref{lifting-principal-convex-sbgps} characterises automorphisms of the rank that lift by means of principal convex subgroups. We use this result to show that the lifting property with respect to the rank is not preserved by taking lexicographic sums (Example~\ref{es:lifting-not-preserved-lex-sums}).
In Subsection~\ref{section-lifting-skeleton} we study valuation preserving automorphisms and we introduce the lifting property with respect to the skeleton (Definition~\ref{def:lifting-property-skeleton}). The main result of the section is Theorem~\ref{hofberger-groups}, which provides a semidirect product decomposition of the $ \vAut G $ of a Hahn group $ G $ satisfying the lifting property w.r.t. the skeleton. In Subsection~\ref{sec:rayner-groups}, based on \cite{KKS-rayner-structures}, we introduce Rayner groups (Definition~\ref{def:rayner-group}) and characterise those that satisfy the lifting property w.r.t. the skeleton (Theorem~\ref{prop:rayner-groups-canonical-stable}).
Section~\ref{sec:matrices} focusses on Hahn sums, providing a description of the group of order preserving automorphisms as a group of matrices, generalising results of \cite{conrad} and \cite{droste-goebel}. Theorem~\ref{prop:oAutGamma-G-direct product} gives a decomposition of the subgroup of order preserving automorphisms that induce the identity on the rank while Theorem~\ref{thm:DG-semidirect-enhanced} provides the matrix group representation.

In \cite{kaplansky-lifting} we further investigate the subgroups of internal and external automorphisms of $ \oAut G $ of an ordered Hahn group $ G $, providing a finer decomposition of those groups. This analysis is of particular importance, since $ \oAut G $ appears as a factor of the automorphism group $ \vAut K $ of a valued Hahn field $ K $ with value group $G$ \cite{ours-fields}. Furthermore we focus on the subgroup of strongly additive (i.e., commuting with formal infinite sums) automorphisms. This latter subgroup is of particular importance for the study of derivations on valued Hahn fields (cf. \cite{derivations}).

\section{Definitions and notation}\label{sec:definitions-and-notation}
Let $ \Gamma $ be a (totally) ordered set and for all $ \gamma\in\Gamma $ let $ A_\gamma $ be an abelian group. For an element $ a = (a_\gamma)_{\gamma\in\Gamma} \in \prod_{\gamma\in\Gamma}A_\gamma $ we define the \emph{support} of $ a $
to be the set $ \supp(a) = \{\gamma\in\Gamma : a_\gamma\neq 0\} $.
\begin{defn}[Hahn group]
	\label{def-hahn-sum-prod}\label{der:skeleton}\label{def:rank}
	\begin{enumerate}[(i)]
	\item
	The \emph{Hahn sum} of the $ A_\gamma $'s, denoted by $ \coprod_{\gamma\in\Gamma}A_\gamma $ is the set of elements of the full cartesian product with finite support:
	\[
	\coprod_{\gamma\in\Gamma}A_\gamma = \left\lbrace a\in\prod_{\gamma\in\Gamma}A_\gamma : |\supp(a)|<\infty  \right\rbrace.
	\]
	\item 
	The \emph{Hahn product} of the $ A_\gamma $'s, denoted by $ \HH_{\gamma\in\Gamma}A_\gamma $ is the set of elements of the full direct product with well ordered support:
	\[
	\HH_{\gamma\in\Gamma}A_\gamma = \left\lbrace a\in\prod_{\gamma\in\Gamma}A_\gamma : \supp(a) \text{ is well ordered}  \right\rbrace.
	\]
	The Hahn sum is a subgroup of the Hahn product.
	\item 
	A group $ G $ comprised between them will be called a \emph{Hahn group} and the pair $ S(G):= [\Gamma;\{A_\gamma:\gamma\in\Gamma\}] $ will be called the \emph{skeleton} of $ G $.
	\item
	In the sequel we will also refer to $ \Gamma $ as the \emph{rank of} $ G $ (sometimes denoted by $ \rk G $) and to the groups $ A_\gamma $ as the \emph{components of} $ G $.
	\item 
	If for all $ \gamma\in\Gamma $ we have $ A_\gamma\simeq A $ for a given abelian group $A$ we say that $ G $ is a \emph{balanced Hahn group}.
	\item 
	If $ G $ is balanced, we will write $ S(G) = [\Gamma; A] $ and call it a \textit{balanced skeleton}.
	\end{enumerate}
\end{defn}
\begin{notation}
	We will use the following notation for the elements of $ G $:
	\[
	\sum_{\gamma\in\Gamma}a_\gamma\1_\gamma := (a_\gamma)_{\gamma\in\Gamma}.
	\]
\end{notation}

\begin{defn}
	Let $ G_1,G_2 $ be Hahn groups and let $ S(G_i) = [\Gamma_i:\{ A^i_\gamma : \gamma\in\Gamma_i \}],\ i=1,2 $ be the respective skeletons. An \emph{isomorphism} $ \tau:S(G_1)\ra S(G_2) $ consists of an isomorphism $ \tau_\Gamma:\Gamma_1\ra\Gamma_2 $ of ordered sets, and for all $ \gamma\in\Gamma_1 $ an isomorphism
	$ \tau_\gamma\colon A^1_\gamma\ra A^2_{\tau_\Gamma(\gamma)} $
	of ordered groups. We will also write
	$ \tau = [\tau_\Gamma;\{\tau_\gamma:\gamma\in\Gamma_1\}]. $\\
\end{defn}
An automorphism of $ S(G) $, for a Hahn group $G$, is hence defined in the obvious way and we will denote by $ \Aut S(G) $ the group of automorphisms of $ S(G) $ under composition.

Notice that there can be automorphisms of $\Gamma$ that cannot occur in any automorphism of the skeleton:
\begin{es}
	Let $ G = \coprod_\Z A_n $ where $ A_{2n} = \Q $ and $ A_{2n+1} = \R $ for all $ n\in\Z $. Then $\sigma_\Z\colon n\mapsto n+1 $ is an automorphism of the chain $ \Z $ that cannot appear in any automorphism of the skeleton, as there is no isomorphism between $ A_n $ and $ A_{n+1} $, for any $ n $.
\end{es}

\begin{defn}
	We call an automorphism $\xi$ of $\Gamma$ \emph{admissible} if, for every $\g\in\G$ there exists an isomorphism $\xi_\g\colon A_\g\to A_{\xi(\g)}$. In other words, $\xi$ is admissible if it can occur in an automorphism of the skeleton.
\end{defn}

\begin{rmk}
	\begin{enumerate}[(i)]
		\item One can easily verify, that the set $\adAut(\G,<)$ of admissible automorphisms forms a subgroup of $\Aut(\G,<)$ under composition.
		
		Let $\alpha,\beta\in\adAut(\G,<)$.
		Then, for all $\g\in\G $ the following two things happen:
		\begin{itemize}
			\item there exist isomorphisms $\alpha_\g\colon A_\g \to A_{\alpha(\g)}$ and $\beta_{\alpha(g)}\colon A_{\alpha(\g)} \to A_{\beta(\alpha(\g))}$ and therefore, there exists an isomorphism
		$$\beta_{\alpha(g)}\circ\alpha_\g \colon A_\g 
		\to A_{\beta(\alpha(\g))}.$$
		\item Because $\alpha$ is admissible, there exists an isomorphism $\alpha_{\alpha\inv(\g)} \colon A_{\alpha\inv(\g)} \to A_\g$. 
		And thus $(\alpha_{\alpha\inv(\g)})\inv \colon A_\g \to A_{\alpha\inv(\g)} $ is also an isomorphism.
		\end{itemize}
	\item If $G$ is balanced with $A_\g\simeq A$ for all $\g\in\G$, then every automorphism $\xi$ of $\Gamma$ is admissible, as we can take $\xi_\g=\id_A$ for all $\g\in\G$.
	We thus have an isomorphism
	\[
	\Aut S(G) \simeq \Aut \G \times \prod_{\g\in\G}\Aut A_\g.
	\]
	\end{enumerate}
\end{rmk}

For the remaining of this section, let us fix a Hahn group $ G $ with skeleton $ S(G) = [\Gamma;\{A_\gamma:\gamma\in\Gamma\}] $.

\begin{defn}\label{def:valuation-and-ordering}
	Since all elements of $G$ have well ordered support, we can define a valuation $ v_{\min} $ on $ G $ with value set $ \Gamma $ by
	\[
	v_{\min}\colon G \longrightarrow \Gamma\cup\{\infty\},\quad  v_{\min}(g) = \begin{cases}
	\infty &\text{if } g=0	\\
	\min\supp(g) &\text{if } g\neq 0
	\end{cases}
	\]
	We will call $ v_{\min} $ the \emph{canonical valuation on} $ G $ and if the context is clear we will simply denote it by $ v $.
\end{defn}
Unless otherwise stated we will always consider $ G $ endowed with its canonical valuation $ v $.

\begin{defn}\label{def:val-pres}
	An automorphism $ \sigma $ of $ G $ is said to be \emph{valuation preserving} if there exists a chain automorphism $ \sigma_{\Gamma} $ of $ \Gamma $ such that, for all $ g\in G $, we have $ \sigma_{\Gamma}(v(g)) = v(\sigma(g)) $.\\
	The valuation preserving automorphisms form a group under composition that we will denote by $ \vAut G $.
\end{defn}
\begin{rmk}[{\cite[Lemma~0.1]{salma-monograph}}]
	\label{rmk:induced-autom}
	An automorphism $ \sigma\in\Aut G $ is valuation preserving if and only if the map $ \sigma_\Gamma\colon\Gamma\ra\Gamma,\ v(x)\mapsto v(\sigma(x)) $
	is a well defined automorphism of chains. We will say that $ \sigma_{\Gamma} $ is the chain automorphism \emph{induced on $ \Gamma $ by $ \sigma $}.\qed
\end{rmk}
\begin{defn}\label{def:val-pres-iso}
	More generally, let $ G_1,G_2 $ be Hahn groups with skeletons $ S(G_i) = [\Gamma_i:\{ A^i_\gamma : \gamma\in\Gamma_i \}],\ i=1,2 $ and let $ v_1,v_2 $ be the canonical valuations on $ G_1 $ and $ G_2 $, respectively. An isomorphism $ \rho\colon G_1\to G_2 $ is said to be \emph{valuation preserving} if there exists an isomorphism of chains $ \rho_\Gamma\colon\Gamma_1\to\Gamma_2 $ such that, for all $ g\in G_1 $ we have $ \rho_\Gamma(v_1(g)) = v_2(\rho(g)) $.
\end{defn}

\begin{lemma}[{\cite[Lemma~0.2]{salma-monograph}}]
	\label{lemma:induced-autom-skeleton}
	Let us keep the notation of Definition~\ref{def:val-pres-iso}.
	A valuation preserving isomorphism $ \sigma\colon G_1\ra G_2 $ of Hahn groups naturally induces an isomorphism of the skeletons $ [\sigma_\Gamma;\{\sigma_\gamma:\gamma\in\Gamma_1\}] \colon S(G_1)\ra S(G_2) $ as follows:
	\begin{itemize}
		\item for any $ \g\in\Gamma_1 $ choose $ 0\neq a \in A_\g $ and set $ \sigma_\Gamma(\g) := v_2(\sigma(a\1_\g)) $ (the fact that $ \sigma $ is valuation preserving implies that $ v_2(\sigma(a\1_\g)) $ does not depend on the choice of $ a $);
	\item 
	for any $ \gamma\in\Gamma_1 $ and each $ g_\gamma\in A^1_\gamma $ set $ \sigma_\gamma(g_\gamma) = \sigma(g_\gamma\1_\g)_{\sigma_\Gamma(\gamma)} $.
	\end{itemize}	
	In particular, a valuation preserving automorphism $ \sigma\in\vAut G $ induces an automorphism $ \sigma_S = [\sigma_\Gamma;\{\sigma_\gamma:\gamma\in\Gamma\}]\in\Aut S(G) $ of the skeleton where $ \sigma_\Gamma\in\Aut(\G, <) $ is defined as in Remark~\ref{rmk:induced-autom} and $ \sigma_\gamma\colon A_\gamma\to A_{\sigma_\Gamma(\gamma)} $ is given by $ \sigma_\gamma(a) = (\sigma(a\1_\gamma))_{\sigma_\Gamma(\gamma)} $.
	\qed
\end{lemma}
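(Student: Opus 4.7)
The plan is to verify, in order, that: (i) $\sigma_\Gamma$ is well defined and coincides with the chain isomorphism provided by Definition~\ref{def:val-pres-iso}; (ii) each $\sigma_\gamma$ lands in $A^2_{\sigma_\Gamma(\gamma)}$ and is a group homomorphism; (iii) $\sigma_\gamma$ is bijective. The statement about an automorphism $\sigma\in\vAut G$ is then just the specialisation $G_1=G_2=G$, with $\sigma_\Gamma\in\oAut\Gamma$ furnished by Remark~\ref{rmk:induced-autom}.

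For (i), pick any $0\neq a\in A^1_\gamma$; then $v_1(a\1_\gamma)=\gamma$, so the defining identity $\rho_\Gamma(v_1(g))=v_2(\sigma(g))$ of a valuation preserving isomorphism forces $v_2(\sigma(a\1_\gamma))=\rho_\Gamma(\gamma)$, independent of the choice of $a$. Thus $\sigma_\Gamma=\rho_\Gamma$ is well defined and is automatically an isomorphism of chains.

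For (ii), the map $a\mapsto\sigma(a\1_\gamma)_{\sigma_\Gamma(\gamma)}$ factors as the composition of the inclusion $A^1_\gamma\hra G_1$, $a\mapsto a\1_\gamma$, the group homomorphism $\sigma\colon G_1\to G_2$, and the projection $G_2\to A^2_{\sigma_\Gamma(\gamma)}$ onto the $\sigma_\Gamma(\gamma)$-th coordinate; hence it is a group homomorphism into $A^2_{\sigma_\Gamma(\gamma)}$. Injectivity is immediate: if $\sigma_\gamma(a)=0$ with $a\neq 0$, the $\sigma_\Gamma(\gamma)$-component of $\sigma(a\1_\gamma)$ vanishes, so $v_2(\sigma(a\1_\gamma))>\sigma_\Gamma(\gamma)$, contradicting (i).

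The main point is surjectivity. Given $b\in A^2_{\sigma_\Gamma(\gamma)}$, set $h:=\sigma^{-1}(b\1_{\sigma_\Gamma(\gamma)})\in G_1$. Since $\sigma^{-1}$ is itself valuation preserving with induced chain map $\sigma_\Gamma^{-1}$, one gets $v_1(h)=\gamma$. Decompose $h=h_\gamma\1_\gamma+h'$: the summand $h_\gamma\1_\gamma$ lies in $\coprod_{\Gamma_1}A^1_\gamma\leq G_1$, so the remainder $h'=h-h_\gamma\1_\gamma$ still lies in $G_1$ and manifestly satisfies $v_1(h')>\gamma$. Applying $\sigma$ and projecting onto the $\sigma_\Gamma(\gamma)$-coordinate annihilates $\sigma(h')$, whose valuation $\sigma_\Gamma(v_1(h'))$ is strictly larger than $\sigma_\Gamma(\gamma)$; what remains is $\sigma_\gamma(h_\gamma)=(b\1_{\sigma_\Gamma(\gamma)})_{\sigma_\Gamma(\gamma)}=b$. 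The only delicate point in the whole argument is precisely this closure of $G_1$ under the truncation $h\mapsto h-h_\gamma\1_\gamma$, which is exactly what the Hahn-sum lower bound $\coprod_{\Gamma}A_\gamma\leq G$ in the definition of a Hahn group is there to guarantee.
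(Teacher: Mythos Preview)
Your argument is correct. Note, however, that the paper does not actually supply a proof of this lemma: it is quoted from \cite{salma-monograph} and closed immediately with a \qed. So there is no ``paper's own proof'' to compare against; what you have written is a clean, self-contained verification that could serve as one.

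A couple of minor remarks. First, in the surjectivity step you should tacitly assume $b\neq 0$ (the case $b=0$ is trivial since $\sigma_\gamma(0)=0$); otherwise $v_2(b\1_{\sigma_\Gamma(\gamma)})=\infty$ and the valuation computation breaks down. Second, the paper's Definition of a skeleton isomorphism speaks of isomorphisms ``of ordered groups'', whereas the components $A_\gamma$ in Definition~\ref{def-hahn-sum-prod} are merely abelian groups and $\sigma$ here is only valuation preserving; your proof establishes that each $\sigma_\gamma$ is a group isomorphism, which is the appropriate conclusion in this generality. If one additionally assumes the components are ordered and $\sigma$ is order preserving, then the same truncation argument (together with $h_\gamma>0\Leftrightarrow h>0\Leftrightarrow\sigma(h)>0$) shows $\sigma_\gamma$ preserves order as well.
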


\begin{defn}\label{def:lex-order}\label{def:oAut}
	If all the components $ A_{\gamma} $ are ordered abelian groups and we denote by $ <_\gamma $ the order relation on $ A_\gamma $, then we can order $ G $ lexicographically by setting, for all $ g \in G $,
	\[g>_{\lex}0\iff g_{v(g)}>_{v(g)}0. \]	
	An automorphism $ \sigma $ is said to be \emph{order preserving} if for all $ g\in G $ we have $ g>0\Ra \sigma(g)>0 $. The order preserving automorphisms also form a group under composition which will be denoted by $ \oAut G $.
\end{defn}

\begin{rmk}\label{rmk:oAut<vAut}
	Assume, moreover, that all the components $ A_\gamma $ be ordered \textit{archimedean} groups.
	Then an order preserving automorphism of $ G $ is automatically valuation preserving.
	So we have $ \oAut G\leq \vAut G\leq \Aut G. $\qed
\end{rmk}

\section{The lifting property}\label{sec:the-lifting-property}

Let $ G $ be a Hahn group with skeleton $ S(G) = [\Gamma;\{A_\gamma:\gamma\in\Gamma\}] $.
In Section~\ref{sec:definitions-and-notation} we saw that a valuation preserving automorphism $ \sigma\in\vAut G $ induces an automorphism $\sigma_S\in\Aut S(G) $ of the skeleton (Lemma~\ref{lemma:induced-autom-skeleton}) and, in particular, an automorphism $ \sigma_\Gamma\in\Aut(\G, <) $ of the rank (Remark~\ref{rmk:induced-autom}). In this section we will study under what conditions, given an automorphism of the skeleton (resp. rank) we can find an automorphism of the group that induces it. In Section~\ref{sec:lifting-property-with-respect-to-the-rank} we will study automorphisms of the chain $ \Gamma $ that lift to \emph{order preserving} automorphisms of $ G $ and in Section~\ref{section-lifting-skeleton} automorphisms of $ S(G) $ that lift to \emph{valuation preserving} automorphisms of $ G $.

\subsection{Lifting from the rank to $ \oAut G $}\label{sec:lifting-property-with-respect-to-the-rank}
Let $G$ be a Hahn group with skeleton $ S(G) = [\Gamma;\{A_\gamma:\gamma\in\Gamma\}] $ and let $ \GG = \HH_\Gamma A_\gamma $ be the corresponding Hahn product.
In this section we will assume that all the components $ A_\gamma $ of the skeleton of $ G $ be archimedean ordered abelian groups and we will consider the induced lexicographic order on $ G $ (Definition~\ref{def:lex-order}).
Let $\sigma \in \oAut G$. Then, by Remark~\ref{rmk:oAut<vAut}, $ \sigma\in\vAut G $ and therefore, it induces an automorphism $\sigma_{\Gamma}\in\Aut(\G, <) $ given by $\sigma_{\Gamma}(v(g)) = v(\sigma(g))$ (Remark~\ref{rmk:induced-autom}).
This gives rise to a group homomorphism
\begin{equation}\label{eq:phi-rank}
\Phi_\Gamma:\oAut G \lra \Aut(\G, <),\ \sigma\longmapsto \sigma_\Gamma.
\end{equation}
It is straightforward to verify that this is a group homomorphism.

%\medskip
%\noindent\textbf{Whenever the context is clear we will omit the subscript and write $ \Phi $ instead of $ \Phi_\Gamma $.}
\begin{defn}
	\label{def:rank-int}
	The kernel $ \ker\Phi_\Gamma $ of the map \eqref{eq:phi-rank}
	is a normal subgroup of $ \oAut G $ that we call the
	\emph{subgroup of internal o-automorphisms} of $G$.
	We will denote it  by $\Int\oAut G$.
\end{defn}
\begin{defn}\label{def:lifting-property-rank}
If the map $ \Phi_\Gamma $ defined in \eqref{eq:phi-rank} admits a section, i.e., an injective group homomorphism $ \Psi_\Gamma\colon \Aut(\G, <)\hookrightarrow \oAut G $ such that $ \Phi_\Gamma\circ\Psi_\Gamma = \id_{\Aut(\G, <)} $, we say that $ G $ has 	the \emph{lifting property with respect to $ \Psi_\Gamma $}.\\
For an automorphism $\tau \in \Aut(\G, <)$ we call $ \Psi_\Gamma(\tau) $ the \emph{lift of $ \tau $ with respect to $ \Psi_\Gamma $}.

\end{defn}
\begin{defn}
	\label{def:rank-ext}
	Assume $ G $ has the lifting property with respect to a fixed section $ \Psi_\Gamma $ of $ \Phi_\Gamma $.
	The subgroup $ \Psi_\Gamma(\oAut G) $ of $ \oAut G $ will be called the subgroup of $ \Psi_\Gamma $\emph{-external automorphisms of $G$} and denoted by ${\Psi_\Gamma}\text{-}\Ext\oAut G$.
\end{defn}
\begin{defn}\label{def:can-lift-prop-rank}
Assume $ G $ is balanced, say $ A_\gamma = A $ for all $ \gamma $. Then for the Hahn product $ \GG = \HH_\Gamma A $ there exists always the \emph{canonical section}
\[
\Psi^c_\Gamma\colon \Aut(\G, <)\to \oAut \GG,\ \tau\longmapsto \tilde\tau
\]
where $ \tilde\tau $ is defined by
\[
\tilde\tau\left( \sum_\gamma g_\gamma\1_\gamma\right) = \sum_\gamma g_\gamma\1_{\tau(\gamma)}.
\]
We call $ \tilde\tau $ the \emph{canonical lift of $ \tau $ to $ \GG $.}

\noindent If, moreover, $ \tilde\tau(G) = G $ we say that $ \tau $ \emph{lifts canonically} to an automorphism of $ G $ and call $ \tilde\tau|_G $ the \emph{canonical lift} of $ \tau $ to $ G $. If all automorphisms of $ \Gamma $ lift canonically we say that $ G $ has the \emph{canonical lifting property with respect to} $ \Gamma $.
\end{defn}

Now we give a characterisation of the automorphisms of the rank $ \Gamma $ that lift to an order-preserving automorphism or $ G $ by means of its principal convex subgroups (Definition~\ref{def:princ-convex-sbg}). We do this in
Theorem~\ref{lifting-principal-convex-sbgps}, after proving a preliminary lemma.
\begin{defn}\label{def:princ-convex-sbg}
	For an element $ \gamma\in\Gamma $, the \emph{principal convex subgroup of $ G $ associated to $ \gamma $} is the subgroup $ C_\gamma := \{x\in G : v(x)\geq \gamma \} $.
\end{defn}
\begin{lemma}
	\label{h-commutes-C}
	Let $\sigma\in\vAut G$ and let $g\in G$ and $C_g := \{x\in G:\ v(x)\geq v(g)\}$. Then we have:
	$$\sigma(C_g) = C_{\sigma(g)} = \{x:\ v(x) \geq v(\sigma(g))\}.$$
	\begin{proof}
		Let $ \sigma_\Gamma$ be the automorphism of the chain $\Gamma$ induced by $\sigma$.
		Let $x\in C_g$. Then
		$$v(x)\geq v(g) \Ra v(\sigma(x)) = \sigma_\Gamma (v(x)) \geq \sigma_\Gamma (v(g)) = v(\sigma(g))$$
		so $\sigma(x) \in C_{\sigma(g)}$, that is $\sigma(C_g) \subseteq C_{\sigma(g)}$.
		
		Vice versa, let $x\in C_{\sigma(g)}$. Then
		$$
		v(x) \geq v(\sigma(g)) \Ra
		v(\sigma^{-1}(x)) = \tilde \sigma^{-1}(v(x)) \geq \sigma_\Gamma^{-1}(v(\sigma(g)))= v(\sigma^{-1}(\sigma(g)))=v(g)
		$$
		because $\sigma_\Gamma^{-1}$ is also an automorphism of $\Gamma$.
		So $\sigma^{-1}(x) \in C_g$, that is $x\in \sigma(C_g)$.
		Hence $\sigma(C_g) \supseteq C_{\sigma(g)}$, which completes the proof.
	\end{proof}
\end{lemma}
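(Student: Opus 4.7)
The plan is to exploit the fact that $\sigma$ being valuation preserving gives a chain automorphism $\sigma_\Gamma\in\oAut\Gamma$ satisfying $v(\sigma(x)) = \sigma_\Gamma(v(x))$ for every $x\in G$, together with the fact that $\sigma_\Gamma$, being an automorphism of chains, preserves the order on $\Gamma$ in both directions. The argument then reduces to unwinding the definition of $C_g$ and chasing valuations through $\sigma_\Gamma$.

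For the forward inclusion $\sigma(C_g)\subseteq C_{\sigma(g)}$, I would take $x\in C_g$, so that $v(x)\geq v(g)$, apply the order-preserving map $\sigma_\Gamma$ to obtain $v(\sigma(x)) = \sigma_\Gamma(v(x)) \geq \sigma_\Gamma(v(g)) = v(\sigma(g))$, and conclude $\sigma(x)\in C_{\sigma(g)}$. For the reverse inclusion, the key observation is that $\sigma^{-1}$ is again valuation preserving, with induced chain automorphism $(\sigma^{-1})_\Gamma = \sigma_\Gamma^{-1}$; this follows directly from Remark~\ref{rmk:induced-autom} applied to $\sigma^{-1}$ (or equivalently from the fact that $\vAut G$ is a group under composition and that $\Phi_\Gamma$-style assignment is a homomorphism). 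Given $x\in C_{\sigma(g)}$, applying the same chain argument to $\sigma^{-1}$ yields $v(\sigma^{-1}(x)) \geq v(g)$, so $\sigma^{-1}(x)\in C_g$ and hence $x\in\sigma(C_g)$.

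There is no real obstacle here: the lemma is essentially a direct consequence of the definition of valuation-preserving automorphism plus the trivial fact that order automorphisms of $\Gamma$ preserve the relation $\geq$. The only point requiring a line of justification is that $\sigma^{-1}$ is itself valuation preserving with the inverse chain automorphism; once this is noted, both inclusions are symmetric applications of the same one-line order-chasing argument.
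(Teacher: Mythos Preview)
Your proposal is correct and follows essentially the same approach as the paper's proof: both establish the two inclusions by pushing the inequality $v(x)\geq v(g)$ through the induced chain automorphism $\sigma_\Gamma$ (for the forward direction) and through $\sigma_\Gamma^{-1}$ (for the reverse direction, via $\sigma^{-1}$). The only cosmetic difference is that you explicitly invoke the fact that $\sigma^{-1}\in\vAut G$ with $(\sigma^{-1})_\Gamma=\sigma_\Gamma^{-1}$, whereas the paper uses this implicitly.
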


\begin{theorem}
	\label{lifting-principal-convex-sbgps}
	Let $\tau\in\Aut(\G, <)$. Then $\tau$ lifts to an automorphism
	$\sigma \in \oAut G$ if and only if, for all $\gamma\in\Gamma$ there is an isomorphism
	$$
	\sigma_\gamma\colon C_\gamma \overset{\sim}{\lra} C_{\tau(\gamma)}
	$$
	such that 
	\begin{equation}
	\label{cond-gamma-delta}
	\gamma>\delta \Rightarrow \sigma_\delta|_{C_\gamma}=\sigma_\gamma.
	\end{equation}
\end{theorem}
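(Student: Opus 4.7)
The forward direction is immediate from the preceding lemma: given $\sigma \in \oAut G$ lifting $\tau$, Lemma~\ref{h-commutes-C} yields $\sigma(C_\gamma) = C_{\tau(\gamma)}$ for all $\gamma$, so $\sigma_\gamma := \sigma|_{C_\gamma}$ is an order-preserving group isomorphism $C_\gamma \to C_{\tau(\gamma)}$; the coherence condition \eqref{cond-gamma-delta} is automatic since $\gamma > \delta$ implies $C_\gamma \subseteq C_\delta$.

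For the converse, the plan is to glue the family $\{\sigma_\gamma\}_{\gamma\in\Gamma}$ into a single $\sigma \in \oAut G$. Since every nonzero $g \in G$ belongs to $C_{v(g)}$, I set $\sigma(0) := 0$ and $\sigma(g) := \sigma_{v(g)}(g)$ for $g \neq 0$. The coherence condition in fact lets us compute $\sigma(g) = \sigma_\delta(g)$ for \emph{any} $\delta \leq v(g)$, and this flexibility secures the homomorphism property: given $x, y \in G$, the choice $\delta := \min(v(x), v(y))$ puts $x$, $y$, and $x+y$ simultaneously in $C_\delta$, so $\sigma(x+y) = \sigma_\delta(x) + \sigma_\delta(y) = \sigma(x) + \sigma(y)$.

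The delicate step, and where I expect the main obstacle to lie, is checking that $\sigma$ induces exactly $\tau$ on $\Gamma$. Fix $g$ with $v(g) = \gamma$; then $\sigma(g) = \sigma_\gamma(g) \in C_{\tau(\gamma)}$ gives $v(\sigma(g)) \geq \tau(\gamma)$, so only strict inequality has to be ruled out. Arguing by contradiction, if $v(\sigma(g)) > \tau(\gamma)$, surjectivity of $\tau$ provides $\alpha \in \Gamma$ with $\tau(\alpha) = v(\sigma(g))$, and $\alpha > \gamma$ by order-preservation of $\tau$. Coherence gives $\sigma_\gamma|_{C_\alpha} = \sigma_\alpha$, a bijection onto $C_{\tau(\alpha)}$, so $\sigma_\gamma^{-1}(C_{\tau(\alpha)}) = C_\alpha$; hence $\sigma(g) \in C_{\tau(\alpha)}$ forces $g \in C_\alpha$, contradicting $v(g) = \gamma < \alpha$.

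Once this is established, bijectivity and order-preservation follow easily. Injectivity is immediate since $\sigma(g) = 0$ gives $\tau(v(g)) = v(\sigma(g)) = \infty$, forcing $g = 0$; surjectivity follows from $\sigma_\gamma(C_\gamma) = C_{\tau(\gamma)}$ combined with the surjectivity of $\tau$. Finally, if $g > 0$, then $g \in C_{v(g)}$ and $\sigma(g) = \sigma_{v(g)}(g) > 0$ since $\sigma_{v(g)}$ is an order isomorphism.
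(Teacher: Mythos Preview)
Your proof is correct and follows the same route as the paper: restrict $\sigma$ for the forward direction, and for the converse glue the $\sigma_\gamma$ via $\sigma(g) := \sigma_{v(g)}(g)$. The only differences are in bookkeeping: the paper splits the additivity check into the two cases $v(x)\neq v(y)$ and $v(x)=v(y)$ rather than using your unified $\delta = \min(v(x),v(y))$ argument, and it obtains bijectivity by exhibiting the explicit inverse $\sigma^{-1}(x) := \sigma_{\tau^{-1}(v(x))}^{-1}(x)$ instead of arguing injectivity and surjectivity separately. Your version is in fact more thorough, since the paper never explicitly verifies that the constructed $\sigma$ is order preserving or that it actually induces $\tau$ on $\Gamma$ --- precisely your ``delicate step''.
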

\begin{proof}
	Assume that $\tau$ lifts to $ \sigma\in\oAut G $. Let $\gamma\in\Gamma$ and choose $g\in G$ such that
	$v(g) = \gamma$. Recall that, by definition, $v(\sigma(g)) = \tau (v(g))$.
	Then Lemma~\ref{h-commutes-C} gives
	$$
	C_\gamma =C_g = \{x:\ v(x)\geq \gamma = v(g)\}\simeq C_{\sigma(g)} = \{x:\ v(x) \geq \tilde \sigma(v(g)) = \tau(\gamma)\} = C_{\tau(\gamma)}.
	$$
	where the isomorphism is simply given by $\sigma$, which is order preserving, so
	condition \eqref{cond-gamma-delta} is satisfied.
	
	Vice versa, suppose that for all $\gamma\in\Gamma$ there is an isomorphism
	$$
	\sigma_\gamma\colon C_\gamma \overset{\sim}{\lra} C_{\tau(\gamma)}
	$$
	such that $\gamma>\delta \Rightarrow \sigma_\gamma(C_\gamma)\subset \sigma_\delta(C_\delta)$.
	We want to construct a lift $\sigma \in \oAut G$ of $\tau$. We define it as follows:
	for $x\in G$ with $v(x) = \gamma$, set $\sigma(x):=\sigma_\gamma(x)$. First let us show that
	$\sigma$ is a well defined group morphism. Let $x,y \in G$. We have two cases:\\
	\textbf{Case 1.}
	$v(x)\neq v(y)$, and we can assume $v(x)<v(y)$.
	Then $v(x+y)=v(x) =:\gamma$. So we have 
	$$
	\sigma(x-y) = \sigma_\gamma (x-y) = \sigma_\gamma(x) - \sigma_\gamma(y) = \sigma(x) - \sigma_{v(y)}(y) = \sigma(x) - \sigma(y)$$
	where in the second-last equality we used condition~\eqref{cond-gamma-delta}.\\
	\textbf{Case 2.} $v(x) = v(y) = \delta$. Then $v(x+y) =: \gamma \geq \delta$. Then
	$$
	\sigma(x-y) = \sigma_\gamma(x-y) = \sigma_\delta(x-y) = \sigma_\delta(x) - \sigma_\delta(y) = \sigma(x) - \sigma(y).
	$$
	Again, for the second equality we used condition~\eqref{cond-gamma-delta}.
	
	To show that $\sigma$ is an automorphism, we define its inverse. Similarly to what we did
	to define $\sigma$ we define $\sigma^{-1}$ as follows: for $x\in G$ with 
	$v(x) = \tau(\gamma)$ we set $\sigma^{-1}(x) = \sigma_\gamma^{-1}(x) $.
	Checking that this is a
	well defined group morphism is done the same way as it was done for $\sigma$. To see that this is indeed the inverse of $ \sigma $, let $x\in G$ with 
	$v(x)=\gamma$.
	Recall that $\sigma_\gamma(x) \in C_{\tau(\gamma)}$ and $\sigma_\gamma^{-1}: C_{\tau(\gamma)}\ra C_\gamma$. So
	$$
	\sigma^{-1}(h(x)) = \sigma^{-1}(\sigma_\gamma(x)) = \sigma_\gamma^{-1}(\sigma_\gamma(x)) = x
	$$
	and similarly $\sigma(\sigma^{-1}(x)) = x$. So $\sigma$ is an automorphism of $G$, which completes the proof.
\end{proof}

The following example shows that, in general, the lifting property is not preserved after taking lexicographic sums. 
\begin{es}\label{es:lifting-not-preserved-lex-sums}
	Let $\Gamma = \Q$ and $G = (\HH_{q<0}\Q) \amalg (\coprod_{q\geq0}\Q)$. So $\Gamma = \rk G = \Q$. Consider the automorphism of the chain $\Q$ defined by $\tilde h (q)= q+2$ and assume that $\tilde h$ lifts to an automorphism $h$ of $G$. Let $g\in G$ be such that $v(g) = -1/2$. So $v(h(g)) = \tilde h(-\frac{1}{2})=\frac{3}{2}>1$. By Lemma~\ref{h-commutes-C} we have $C_g \simeq C_{h(g)}$.  Now $C_{h(g)} = \{x:\ v(x) \geq \frac{3}{2}\}\subseteq\coprod_{q>0}\Q$ is countable. But 
	$C_g = \{x:\ v(x)\geq -\frac{1}{2}\}$ is uncountable. To prove this, we show that
	the uncountable set $\{0,1\}^\N$ of functions $f:\N\ra\{0,1\}$ can be embedded
	into $C_g$. We construct the embedding as follows: to each function $f:\N\ra\{0,1\}$
	we associate the element $f^* = (f^*_q)_{q<0}\in\HH_{q<0}\Q$ defined by
	$$
	f^*_q =
	\begin{cases}
	f(n) \text{ if } q = -\frac{1}{2+n}\\
	0 \text{ otherwise.}
	\end{cases}
	$$
	Notice that, for all $f\in \{0,1\}^\N$, the element $f^*$ belongs to $C_g$: indeed
	we have $\supp (f^*) \subseteq \{-\frac{1}{2+n}|n\in\N\}$ which is well ordered,
	so $f^*\in\HH_{q<0}\Q$ and $v(f^*)\geq -\frac{1}{2}$. Moreover,
	the map $f\mapsto f^*$ is injective, for if $f\neq g$ then there exists $n\in\N$ such
	that $$f^*_{-\frac{1}{2+n}} = f(n)\neq g(n) = g^*_{-\frac{1}{2+n}}.$$\qed
\end{es}

This last example shows that, even if two Hahn groups have the lifting property, after we take their lexicographic sum not all automorphism of the rank will lift. The next proposition characterises those that do.

\begin{prop}\label{prop:condition-lifting-preserved-lex}
	Let $G_1,G_2$ be two groups with the lifting property (with respect to the rank) and consider $G =G_1\amalg G_2$. Then $\rk G=\rk G_1 + \rk G_2$. Let $\tau$ be an automorphism of $\rk G$ such that $\tau|_{\rk G_i}\in\Aut(\rk G_i,<) $ for $i=1,2$.
	Then $\tau$ lifts to an automorphism of $G$.
\end{prop}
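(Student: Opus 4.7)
The plan is to assemble the lift of $\tau$ from lifts of its two restrictions $\tau_i := \tau|_{\rk G_i} \in \oAut(\rk G_i)$. Since $G_i$ has the lifting property with respect to the rank, fix sections $\Psi_{\Gamma_i}$ and set $\sigma_i := \Psi_{\Gamma_i}(\tau_i) \in \oAut G_i$. Each element $g\in G = G_1 \amalg G_2$ decomposes uniquely as $g = g_1 + g_2$ with $g_i \in G_i$ (viewing $G_i$ as the subgroup of elements supported in $\rk G_i$). Define
\[
\sigma\colon G\to G,\qquad \sigma(g_1+g_2):=\sigma_1(g_1)+\sigma_2(g_2).
\]
The rest of the proof is to show that $\sigma$ is an order preserving automorphism of $G$ whose induced automorphism on $\rk G$ equals $\tau$.

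By construction $\sigma$ is a group homomorphism, bijective with inverse $g_1+g_2 \mapsto \sigma_1^{-1}(g_1)+\sigma_2^{-1}(g_2)$, and well defined because $\sigma_i(G_i)=G_i$; this last point relies on the hypothesis $\tau(\rk G_i) = \rk G_i$, which guarantees that each $\sigma_i$ really lands in $G_i$ rather than in some larger ambient object. To see that $\sigma$ preserves the lexicographic order, take $g=g_1+g_2>0$. If $g_1\neq 0$ then $v(g)=v(g_1)\in\rk G_1$, so $g_1>0$ in $G_1$, hence $\sigma_1(g_1)>0$; since $v(\sigma_1(g_1))=\tau_1(v(g_1))\in\rk G_1$ lies strictly below every element of $\rk G_2$, it follows that $v(\sigma(g))=v(\sigma_1(g_1))$ and $\sigma(g)>0$. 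The case $g_1=0$ is analogous. Finally, the identity $v(\sigma(g))=\tau(v(g))$ splits according to which $\rk G_i$ contains $v(g)$, and in each case reduces to $v(\sigma_i(g_i))=\tau_i(v(g_i))$, so $\sigma$ indeed lifts $\tau$.

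The only delicate point in the whole argument is the well-definedness of $\sigma$, i.e., that $\sigma(G) \subseteq G$. This is exactly where the hypothesis $\tau|_{\rk G_i} \in \oAut(\rk G_i)$ is essential: without it, the rank automorphism might send a point of $\rk G_1$ into $\rk G_2$, and the ``landing'' side could be unable to receive the image of the corresponding element---precisely the phenomenon occurring in Example~\ref{es:lifting-not-preserved-lex-sums}, where the $\Gamma_2$-side Hahn sum is too small to accommodate the image of a $\Gamma_1$-supported element of the Hahn product. The preservation hypothesis rules this out and the construction goes through without further obstruction.
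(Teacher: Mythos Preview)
Your proof is correct and follows exactly the same construction as the paper: lift each restriction $\tau_i$ to $\sigma_i\in\oAut G_i$ using the lifting property of $G_i$, and define $\sigma(g_1+g_2)=\sigma_1(g_1)+\sigma_2(g_2)$. The paper's own proof is a two-line sketch that declares the verification ``straightforward'', whereas you have actually carried out the checks (order preservation, that $\sigma$ induces $\tau$), so your version is more complete. One small expository quibble: the sentence ``$\sigma_i(G_i)=G_i$ \ldots\ relies on the hypothesis $\tau(\rk G_i)=\rk G_i$'' is slightly misplaced---$\sigma_i(G_i)=G_i$ is automatic once $\sigma_i\in\oAut G_i$; the hypothesis is used earlier, to ensure that $\tau_i$ is an automorphism of $\rk G_i$ so that the lifting property applies at all.
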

\begin{proof}
	Let $ \Gamma:= \rk G $ and $ \Gamma_i :=\rk G_i $ for $ i=1,2 $. Let $ \tau \in\Aut(\G, <) $ be such that $ \tau_i :=\tau|_{\Gamma_i}\in \Aut(\G, <)_i $ for $ i=1,2 $. Since both $ G_i $'s have the lifting property, let $ \sigma_i\in\vAut G_i $ be a lift of $ \tau_i $ and define $ \sigma\colon G\ra G $ through $ \sigma(g_1\1_1 + g_2\1_2):=\sigma_1(g_1)\1_1+\sigma_2(g_2)\1_2 $. It is straightforward to verify that this is an automorphism of $ G $.
\end{proof}

\subsection{Lifting from the skeleton to $ \vAut G $}
\label{section-lifting-skeleton}
In this section we consider the \emph{lifting property with respect to the skeleton}, and ask under what conditions an automorphism of the skeleton of a Hahn group lifts to an automorphism of the group.
Recall that (Lemma~\ref{lemma:induced-autom-skeleton}) a valuation preserving automorphism $ \sigma\in\vAut G $ induces an automorphism $ \sigma_S \in\Aut S(G) $ of the skeleton.
This gives rise to a map, that we will denote by $ \Phi_S $, in analogy to the one in \eqref{eq:phi-rank}:
\begin{equation}\label{eq:phi-skeleton}
	\Phi_S\colon \vAut G \lra \Aut S(G),\ \sigma\longmapsto[\sigma_\Gamma;\{\sigma_{\gamma}:\gamma\in\Gamma\}].
\end{equation}
It is straightforward to verify that $ \Phi_S $ is a group homomorphism.

\begin{defn}\label{def:internal}
	The kernel of the map $ \Phi_S $ defined in \eqref{eq:phi-skeleton} is a normal subgroup of $ \vAut G $ called \emph{the group of internal v-automorphisms} and denoted by $ \Int\vAut G $.
	
	If the context is clear we will drop the ``$ v $'' from the notation and terminology.
\end{defn}
\begin{rmk}\label{rmk:internal-by-valuation}
	An automorphism $\sigma\in\vAut G$ is internal if and only if, for all $g\in G$, we have
	\begin{equation}
	\label{eq:internal-by-valuation}
	v(g)=v(\sigma(g))\quad\text{ and }\quad g_{v(g)}=\sigma(g)_{v(g)}.
	\end{equation}
	Indeed, conditions \eqref{eq:internal-by-valuation} are equivalent to $ \Phi(\sigma)=\id_{S(G)} $.
	\qed
\end{rmk}

\begin{defn}\label{def:lifting-property-skeleton}
If the map $ \Phi_S $ defined in \eqref{eq:phi-skeleton} admits a section, i.e., an injective group homomorphism $ \Psi_S\colon \Aut S(G)\hookrightarrow \vAut G $ such that $ \Phi_S\circ\Psi_S = \id_{\Aut S(G)} $, we say that $ G $ has the \emph{lifting property with respect to $ \Psi_S $}.
For $ \tau\in\Aut(S(G)) $, the automorphism $ \Psi_S(\tau)\in\vAut G $ is called \emph{the lift of $ \tau $ with respect to $ \Psi_S $}.
\end{defn}
\begin{defn}
\label{def:skeleton-ext}
Assume $ G $ has the lifting property with respect to a fixed section $ \Psi_S $ of $ \Phi_S $.
The subgroup $ \Psi_S(\Aut S(G)) $ of $ \vAut G $ will be called the subgroup of $ \Psi_S $\emph{-external automorphisms of $G$} and denoted by ${\Psi_S}\text{-}\Ext\Aut G$.
If no confusion can arise we will just write $ \Ext\Aut G $.
\end{defn}

\begin{rmk}
	If $ G $ has the lifting property with respect to a given section $ \Psi_S $ then the homomorphism $ \Phi_S $
	defined in \eqref{eq:phi-skeleton} is surjective and every $ \tau\in\Aut S(G) $ lifts to an automorphism $ \sigma\in\vAut G $. The first isomorphism theorem thus yields
	\[
	\Aut S(G)\simeq\frac{\vAut G}{\Int\Aut G}.
	\]
\qed\end{rmk}

%\subsection{The semidirect product decomposition theorem}
%\label{sec-structure}\ \\
In \cite[Section~3]{ours-fields} we provide a decomposition of the group of valuation preserving automorphisms of a Hahn field with the lifting property into the semidirect product of its subgroups of internal and external automorphisms. Here we generalise this result to Hahn groups with the lifting property.

\begin{theorem}
	\label{hofberger-groups}
	Let $G$ be a Hahn group with the lifting property with respect to a given section $ \Psi_S $ of $ \Phi_S $.
	Then we can decompose $ \vAut G $ into the (inner) semidirect product of its subgroups of internal and external automorphisms:
	$$\vAut G = \Int\Aut G \rtimes \Ext\Aut G.$$
\end{theorem}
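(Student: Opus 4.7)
The plan is to verify directly the three standard conditions characterising an inner semidirect product: namely that $\Int\Aut G$ is normal in $\vAut G$, that $\Int\Aut G \cap \Ext\Aut G = \{\id_G\}$, and that every element of $\vAut G$ can be written as a product of an internal automorphism and an external one. Normality is immediate, since $\Int\Aut G$ was defined (Definition~\ref{def:internal}) as the kernel of the group homomorphism $\Phi_S\colon \vAut G \to \Aut S(G)$.

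For the trivial intersection, I would take $\sigma \in \Int\Aut G \cap \Ext\Aut G$. By the definition of $\Ext\Aut G$ (Definition~\ref{def:skeleton-ext}) there exists $\tau \in \Aut S(G)$ with $\sigma = \Psi_S(\tau)$, and since $\sigma \in \ker \Phi_S$ we get
\[
\id_{\Aut S(G)} = \Phi_S(\sigma) = \Phi_S(\Psi_S(\tau)) = \tau,
\]
using the defining property $\Phi_S \circ \Psi_S = \id_{\Aut S(G)}$ of the section (Definition~\ref{def:lifting-property-skeleton}). Hence $\sigma = \Psi_S(\id_{\Aut S(G)}) = \id_G$, since $\Psi_S$ is a group homomorphism.

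For the product decomposition, I would start with an arbitrary $\sigma \in \vAut G$ and set $\tau := \Phi_S(\sigma) \in \Aut S(G)$ together with its lift $\rho := \Psi_S(\tau) \in \Ext\Aut G$. Applying $\Phi_S$ to $\sigma \rho^{-1}$ gives
\[
\Phi_S(\sigma \rho^{-1}) = \Phi_S(\sigma) \cdot \Phi_S(\rho)^{-1} = \tau \cdot \tau^{-1} = \id_{\Aut S(G)},
\]
so $\sigma \rho^{-1} \in \Int\Aut G$. Therefore $\sigma = (\sigma \rho^{-1}) \cdot \rho$ lies in $\Int\Aut G \cdot \Ext\Aut G$, completing the verification.

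There is no real obstacle here: the lifting property supplies a set-theoretic right inverse to $\Phi_S$ that is moreover a group homomorphism, which is exactly what is needed to split the short exact sequence
\[
1 \lra \Int\Aut G \lra \vAut G \overset{\Phi_S}{\lra} \Aut S(G) \lra 1.
\]
The only care required is to use the homomorphism property of $\Psi_S$ (not merely that it is a section of maps) when manipulating $\Phi_S(\rho)^{-1}$ and when concluding $\Psi_S(\id) = \id$; both uses are immediate from Definition~\ref{def:lifting-property-skeleton}.
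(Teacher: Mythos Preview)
Your proof is correct and follows essentially the same route as the paper: both recognise that the lifting property provides a group-theoretic section of $\Phi_S$, which splits the short exact sequence $1 \to \Int\Aut G \to \vAut G \to \Aut S(G) \to 1$. The only difference is cosmetic: the paper cites a reference for the splitting lemma, whereas you verify the three semidirect-product conditions by hand.
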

\begin{proof}
	%	We want to apply Lemma~\ref{semidirect-exact}.
	Let us consider
	the sequence
	$$
	\xymatrix{ 
		\Int\Aut G \ar[r]^-\iota &
		\vAut G \ar[r]^-{\Phi_S} &
		\Aut S(G) \ar@{-->}@/^1pc/[l]^{\Psi_S}
	}
	$$
	where $\iota$ is the canonical embedding. By definition of $ \Int\Aut G $, we have $\im \iota = \ker\Phi$, so the sequence is exact. Therefore (see \cite[Theorem~3.3]{conrad-note-splitting}) we have
	\[
	\vAut G = \im \iota \rtimes \im \Psi = \ker\Phi\rtimes \im\Psi 
	= \Int\Aut G \rtimes \Ext\Aut G.
	\]
\end{proof}

Analogously to the previous section, we can define the canonical lifting property with respect to the skeleton: 

\begin{defn}\label{def:can-lift-prop-skeleton}
	For the Hahn product $ \GG = \HH_\Gamma A_\gamma $ there exists always the \emph{canonical section}
	\[
	\Psi^c_S\colon \Aut S(G)\to \vAut \GG,\ \tau = [\tau_\Gamma;\{\tau_\gamma : \gamma\in\Gamma \}]\longmapsto \tilde\tau
	\]
	where $ \tilde\tau $ is defined by
	\[
	\tilde\tau\left( \sum_\gamma g_\gamma\1_\gamma\right) = \sum_\gamma \tau_\gamma(g_\gamma)\1_{\tau_\Gamma(\gamma)}.
	\]
	We call $ \tilde\tau $ the \emph{canonical lift of $ \tau $ to $ \GG $.}
	
	\noindent If, moreover, $ \tilde\tau(G) = G $ we say that $ \tau $ \emph{lifts canonically} to an automorphism of $ G $ and call $ \tilde\tau|_G $ the \emph{canonical lift of $ \tau $ to $ G $}. If all automorphisms of $ S(G) $ lift canonically we say that $ G $ has the \emph{canonical lifting property with respect to} $ S(G) $.
\end{defn}

In the next section we introduce an interesting class of Hahn groups, that we will call \emph{Rayner groups} (Definition~\ref{def:rayner-group}) in which we can characterise those groups that have the canonical lifting property.

\subsection{Rayner groups}\label{sec:rayner-groups}
A family $ \cF $ of subsets of $ \Gamma $
is said to be a \emph{group family} 
(cf. \cite[Paragraph 2]{rayner1968})
if the following properties are satisfied:
\begin{enumerate}[(i)]
	\item the elements of $ \cF $ are well ordered subsets of $ \Gamma $;
	\item $ A,B\in\cF \Ra A\cup B\in\cF $;
	\item $ A\in\cF\wedge B\subseteq A \Ra B\in\cF $.
\end{enumerate}
\begin{prop}[{\cite[Theorem~1]{rayner1968}}]\label{prop:rayner-family-yield-groups}
	If $ \cF $ is a group family then the set $ \GG(\cF) $ of elements of $ \GG $ whose supports belong to $ \cF $ is a subgroup of $ \GG $.
\end{prop}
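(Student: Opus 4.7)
The plan is to verify the subgroup criterion directly: every element of $\GG(\cF)$ lies in $\GG$, the zero element lies in $\GG(\cF)$, and $\GG(\cF)$ is closed under subtraction. The three axioms (i)--(iii) of a group family map one-to-one onto these three requirements, so the proof is essentially a routine check.

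First I would observe that $\GG(\cF) \subseteq \GG$: if $a \in \GG(\cF)$ then $\supp(a) \in \cF$, and by axiom (i) every element of $\cF$ is a well-ordered subset of $\Gamma$, so $\supp(a)$ is well ordered and hence $a \in \GG$ by Definition~\ref{def-hahn-sum-prod}(ii).

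Next I would verify that $0 \in \GG(\cF)$. Assuming $\cF$ is nonempty (otherwise there is nothing to prove, as $\GG(\cF) = \emptyset$ would not be a group), pick any $A \in \cF$. Since $\emptyset \subseteq A$, axiom (iii) yields $\emptyset \in \cF$. But $\supp(0) = \emptyset$, so $0 \in \GG(\cF)$.

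Finally I would establish closure under subtraction. Let $a, b \in \GG(\cF)$, so that $\supp(a), \supp(b) \in \cF$. The key elementary observation is
\[
\supp(a - b) \;\subseteq\; \supp(a) \cup \supp(b),
\]
because any coordinate on which $a - b$ is nonzero must already be nonzero in at least one of $a$ or $b$. By axiom (ii) we have $\supp(a) \cup \supp(b) \in \cF$, and then axiom (iii) applied to the inclusion above gives $\supp(a-b) \in \cF$. Hence $a - b \in \GG(\cF)$, and the subgroup criterion is satisfied.

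There is no real obstacle: each of the three closure conditions for a subgroup is exactly matched by one of the three axioms of a group family, and the only subtlety is the trivial point that the empty set belongs to any nonempty group family, which handles the identity.
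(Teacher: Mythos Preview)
Your proof is correct and follows essentially the same route as the paper's: both verify the subgroup criterion by using axiom~(i) to ensure $\GG(\cF)\subseteq\GG$, axioms~(ii) and~(iii) together with $\supp(a\pm b)\subseteq\supp(a)\cup\supp(b)$ for closure, and the nonemptiness of $\cF$ (implicitly in the paper, explicitly in your argument) to get $0\in\GG(\cF)$. The only cosmetic difference is that the paper checks closure under negation and addition separately, whereas you package them into the single subtraction criterion.
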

\begin{proof}
	By property (i) all elements of $ \cF $ are well ordered so they can occur as supports of elements of $ \GG $. Since for all $ g\in \GG $ we have $ \supp(g) = \supp(-g) $ then $ g\in\GG(\cF) $ implies $ -g\in\GG(\cF) $. If $ g,h\in\GG(\cF) $, then $ \supp(g+h)\subseteq \supp(g) \cup \supp(h) $: the latter is in $ \cF $ by (ii) and hence by (iii) the former is too. So $ g+h\in\GG(\cF) $. It follows then that also $ 0\in\GG(\cF) $ so $ \GG(\cF) $ is indeed a subgroup of $ \GG $.
\end{proof}
\begin{defn}\label{def:rayner-group}
	Groups obtained as in Proposition~\ref{prop:rayner-family-yield-groups} will be called \emph{Rayner groups}. These generalise an analogous notion (Rayner fields) introduced by Rayner in \cite{rayner1968} in the context of Hahn fields. Further \emph{Rayner structures} are introduced and studied in \cite{KKS-rayner-structures}.
\end{defn}

The next result gives a necessary and sufficient condition for a Rayner group to satisfy the canonical lifting property.
\begin{theorem}\label{prop:rayner-groups-canonical-stable}
	Let $ \GG $ be the Hahn product over the skeleton $ [\Gamma;\{ A_\gamma:\gamma\in\Gamma \}] $. Let $ \cF $ be a group family with respect to $ \Gamma $. Then the Rayner group $ G = \GG(\cF) $ has the canonical lifting property with respect to the skeleton if and only if $ \cF $ is stable under the action of $ \Aut S(G) $, that is, for all $ [\bar\tau;\{ \tau_\gamma:\gamma\in\Gamma \}]\in\Aut S(G) $ and for all $ T\in\cF $ we have $ \bar\tau(T)=T $.
\end{theorem}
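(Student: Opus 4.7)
The plan is to use the single key computation that, for the canonical lift $\tilde\tau$ of any $\tau = [\bar\tau;\{\tau_\gamma\}] \in \Aut S(G)$, one has
\[
\supp(\tilde\tau(g)) = \bar\tau(\supp(g)) \quad \text{for every } g \in \GG.
\]
This follows immediately from the formula for $\tilde\tau$ and the fact that each $\tau_\gamma$ is an isomorphism, so $\tau_\gamma(g_\gamma) \neq 0 \iff g_\gamma \neq 0$. Once this identity is in hand, both directions reduce to tracking supports. (I read the statement ``$\bar\tau(T)=T$'' as shorthand for ``$\bar\tau(T) \in \cF$'', i.e., $\cF$ is setwise invariant under the action of $\bar\tau$ on subsets of $\Gamma$; this is the natural meaning of stability under the action of $\Aut S(G)$.)

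For the forward implication, assume $G = \GG(\cF)$ has the canonical lifting property. Let $\tau = [\bar\tau;\{\tau_\gamma\}] \in \Aut S(G)$ and $T \in \cF$. Since $T$ is well ordered, I would pick, for each $\gamma \in T$, a nonzero element $a_\gamma \in A_\gamma$, and form $g := \sum_{\gamma \in T} a_\gamma \1_\gamma$. Then $\supp(g) = T \in \cF$, hence $g \in G$. By hypothesis $\tilde\tau(g) \in G$, and by the key identity $\supp(\tilde\tau(g)) = \bar\tau(T)$; therefore $\bar\tau(T) \in \cF$, establishing stability.

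For the converse, assume $\cF$ is stable under $\Aut S(G)$, and let $\tau \in \Aut S(G)$. For any $g \in G$ the identity gives $\supp(\tilde\tau(g)) = \bar\tau(\supp(g)) \in \cF$, so $\tilde\tau(g) \in G$, i.e.\ $\tilde\tau(G) \subseteq G$. To get equality, I would apply the same reasoning to $\tau^{-1} = [\bar\tau^{-1};\{\tau^{-1}_{\bar\tau^{-1}(\gamma)}\}] \in \Aut S(G)$: since $\Psi^c_S$ is a group homomorphism, $\widetilde{\tau^{-1}} = (\tilde\tau)^{-1}$, and by the same argument $(\tilde\tau)^{-1}(G) \subseteq G$, whence $G \subseteq \tilde\tau(G)$. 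Thus $\tilde\tau$ restricts to an automorphism of $G$, which is exactly the canonical lifting property.

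The proof is essentially a bookkeeping exercise; the only conceptual step is recognising that the support of $\tilde\tau(g)$ is precisely the $\bar\tau$-image of the support of $g$, which reduces the question of $\tilde\tau(G) = G$ to a question about which subsets of $\Gamma$ appear as supports of elements of $G$. I do not anticipate a real obstacle, but I would be careful to invoke both $\tau$ and $\tau^{-1}$ in the converse so as to get a bijection of $G$ rather than a mere inclusion.
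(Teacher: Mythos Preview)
Your proposal is correct and follows essentially the same route as the paper: both directions hinge on the identity $\supp(\tilde\tau(g)) = \bar\tau(\supp(g))$, and both pick an element with prescribed support $T$ for the forward implication. Your reading of ``$\bar\tau(T)=T$'' as ``$\bar\tau(T)\in\cF$'' matches the paper's own usage in the proof. If anything, you are slightly more thorough in the converse: the paper stops after showing $\tilde\tau(G)\subseteq G$, whereas you explicitly invoke $\tau^{-1}$ to obtain the reverse inclusion and hence the equality $\tilde\tau(G)=G$ required by the definition.
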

\begin{proof}
	Assume that $ \cF $ be stable under $ \Aut S(G) $ and let $ g = \sum g_\gamma\1_\gamma\in G $. Let also $ \tau=[\bar\tau;\tau_\gamma] $ be an automorphism of $ S(G) $. Then $ \supp\left( \sum\tau_\gamma(g_\gamma)\1_{\bar\tau(\gamma)}\right) = \{ \bar\tau(\gamma):\gamma\in\supp(g) \} = \bar\tau(\supp(g)) $ which, by our assumption, belongs to $ \cF $. Thus $ \sum\tau_\gamma(g_\gamma)\1_{\bar\tau(\gamma)} \in G $ and so $ G $ has the canonical lifting property with respect to the skeleton.
	
	Vice versa, assume that $ G $ has the canonical lifting property and let $ T\in\cF $. Let $ g = \sum g_\gamma\1_\gamma \in G $ be such that $ \supp(g)=T $. Then, for all $ [\bar\tau;\tau_\gamma] $ we have $ h:= \sum\tau_\gamma(g_\gamma)\1_{\bar\tau(\gamma)}\in G $, so in particular
	$ \supp(h) \in \cF $. So $ \cF $ is stable under action of $ \Aut S(G) $.
\end{proof}

\begin{defn}
	Let $ \kappa $ be an infinite regular cardinal and let $ \cF_\kappa $ be the family of all well ordered subsets of $ \Gamma $ of cardinality smaller than $ \kappa $. The $ \kappa $-bounded subgroup of $ \GG $ consists of all elements of $ \GG $	with support of cardinality less than $ \kappa $, and will be denoted by $ \GG_\kappa $.
	
	\noindent Notice that the Hahn sum and the Hahn product are special cases of $ \kappa $-bounded subgroups, with $ \kappa = \aleph_0 $ and $ \kappa = |\GG|^+ $ respectively.
\end{defn}

\begin{cor}\label{cor:k-bdd-have-canonical-lifting}
	Let $ \kappa $ be an infinite regular cardinal. Then $ \GG_\kappa $ has the canonical lifting property. In particular, $ \GG $ and $ \coprod_\Gamma A_\gamma $ have the canonical lifting property.
\end{cor}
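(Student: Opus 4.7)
The plan is to apply Theorem~\ref{prop:rayner-groups-canonical-stable} to the family $\cF_\kappa$ of all well ordered subsets of $\Gamma$ of cardinality less than $\kappa$. First I would verify that $\cF_\kappa$ is a group family in the sense of Section~\ref{sec:rayner-groups}, which will identify $\GG_\kappa$ as the Rayner group $\GG(\cF_\kappa)$. Axiom~(i) is built into the definition; axiom~(iii) holds because a subset of a well ordered set is well ordered with cardinality no larger than the original; and for axiom~(ii) the union of two well ordered subsets of a chain is well ordered (the minimum of any nonempty subset is the smaller of its two minima coming from the two pieces), while $|A \cup B|$ is finite when $A$ and $B$ both are, and otherwise equals $\max(|A|,|B|) < \kappa$ by standard infinite cardinal arithmetic, using only that $\kappa$ is infinite.

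Next I would check stability of $\cF_\kappa$ under $\Aut S(G)$: given $\tau = [\bar\tau;\{\tau_\gamma\}] \in \Aut S(G)$ and $T \in \cF_\kappa$, the restriction $\bar\tau|_T \colon T \to \bar\tau(T)$ is an order-isomorphism onto its image, so $\bar\tau(T)$ inherits both well-orderedness and cardinality from $T$ and hence lies in $\cF_\kappa$. Theorem~\ref{prop:rayner-groups-canonical-stable} then immediately gives that $\GG_\kappa$ has the canonical lifting property with respect to its skeleton.

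For the ``in particular'' assertions: $\coprod_\Gamma A_\gamma = \GG_{\aleph_0}$, since ``finite support'' is the same as ``support of cardinality less than $\aleph_0$'', so the Hahn sum is covered by $\kappa = \aleph_0$. For the Hahn product $\GG$, every element has support of cardinality at most $|\Gamma|$, so $\GG = \GG_\kappa$ for any regular $\kappa > |\Gamma|$, for instance $\kappa = |\Gamma|^+$, which is a successor cardinal and hence regular.

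No serious obstacle appears: the whole argument reduces to the observation that well-orderedness and cardinality bounds are preserved both by finite unions and by order-automorphisms of $\Gamma$. The only point worth flagging is that regularity of $\kappa$ plays no role in the lifting step itself and belongs to the convention under which $\GG_\kappa$ is introduced.
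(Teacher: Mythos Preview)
Your argument is correct and follows essentially the same route as the paper: apply Theorem~\ref{prop:rayner-groups-canonical-stable} by checking that $\cF_\kappa$ is stable under $\Aut S(G)$, using that $\bar\tau$ is an order-preserving bijection of $\Gamma$ and hence preserves both well-orderedness and cardinality of subsets. You are slightly more thorough than the paper in explicitly verifying the group-family axioms and spelling out the ``in particular'' cases, and your closing remark that regularity of $\kappa$ is irrelevant to the lifting argument is a valid observation.
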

\begin{proof}
	By Proposition~\ref{prop:rayner-groups-canonical-stable} it suffices to show that the family $ \cF_\kappa $ of well ordered subsets of $ \Gamma $ is stable under $ \Aut S(G) $. Let $ [\bar\tau;\{ \tau_\gamma:\gamma\in\Gamma \}]\in\Aut S(G) $ and let $ T $ be a well ordered subset of $ \Gamma $ of cardinality less than $ \kappa $. Then $ \bar\tau(T) $ will have the same cardinality as $ T $, because $ \bar\tau $ is an automorphism of $ \Gamma $. Since it is also order preserving then $ \bar\tau(T) $ will also be well ordered, hence $ \bar\tau(T)\in\cF_\kappa $.
\end{proof}

Analogous results to the ones contained in this section hold for Rayner fields: see \cite[Section~3.2]{ours-fields}.

\section{Automorphism groups as groups of matrices}\label{sec:matrices}
In this section we are going to generalise and sharpen results of Conrad \cite{conrad} and Droste-G\"obel \cite{droste-goebel}. The first describes the group of order preserving automorphisms of a Hahn sum as a certain group of triangular matrices. The latter obtain a semidirect product decomposition, which is different from the one we gave in Theorem~\ref{hofberger-groups} -- we will explain how the two relate.
 
Let $ \Gamma $ be a chain, $ \{A_\gamma:\gamma\in\Gamma\} $ a family of ordered abelian groups and $ G = \coprod_{\gamma\in\Gamma}A_\gamma $ the Hahn sum. Let $ \End A_\gamma $ be the ring of endomorphisms of $ A_\gamma $ (with pointwise addition and composition). For all $ \alpha,\beta\in\Gamma $ let $ H_{\alpha\beta} = \Hom(A_\beta,A_\alpha) $ be the group of homomorphisms from $ A_\beta $ into $ A_\alpha$ (with pointwise addition). Let $ \Delta $ be the set of all $ \Gamma{\times}\Gamma $-matrices $ (\sigma_{\alpha\beta}) $ where
\begin{enumerate}[(i)]
	\item $ \sigma_{\alpha\alpha}\in\End A_\alpha $;
	\item $ \sigma_{\alpha\beta}\in H_{\alpha\beta} $;
	\item for every $ \beta $ and for all $ a\in A_\beta $ we have $ \sigma_{\alpha\beta}(a)=0 $ for all but finitely many $ \alpha $.
\end{enumerate}
Then $ \Delta $ forms a ring with respect to the usual matrix addition and multiplication (condition (iii) ensures that the product be well defined).
\begin{rmk}\label{rmk:isom-End-Delta}
There is an isomorphism between the rings $ \End G $ and $\Delta $.
Indeed, for $ a=\sum a_\gamma\1_\gamma\in G $ and $ (\sigma_{\alpha\beta})\in\Delta $ we can consider the row vector $ (a_\gamma) $ and multiply it on the left to get
\begin{equation}\label{eq:formula-matrix-multiplication}
(a_\gamma)(\sigma_{\alpha\beta}) = \left(\sum_{\alpha\in\supp(a)} \sigma_{\alpha\beta}(a_\alpha)\right)_{\beta\in\Gamma} =: (b_\beta)_{\beta\in\Gamma}
\end{equation}
where the sum on the right hand side makes sense because the $ \supp (a) $ is finite.
Then it is clear that the map $ \sigma: \sum a_\gamma\1_\gamma\mapsto \sum b_\gamma\1_\gamma $ is an endomorphism of $ G $ induced by $ (\sigma_{\alpha\beta}) $.

Vice versa, if $ \sigma\in\End G $ and $ a=\sum a_\gamma\1_\gamma\in G $
let $ b=\sum b_\gamma\1_\gamma := \sigma(a) $. Then for all $
\alpha\in\Gamma $ define $ \sigma_{\alpha\beta}(a_\alpha):= b_\beta $.
Since $ b\in G $ then $ \supp (b) $ is finite and therefore $
\sigma_{\alpha\beta} $ satisfies condition (iii) above. Hence $ (\sigma_{\alpha\beta})\in \Delta $ and it is easy to verify that the correspondence $ \sigma \mapsto (\sigma_{\alpha\beta}) $ is an isomorphism of $ \End G $ onto $ \Delta $.
Thus the group $ \Aut G $ corresponds isomorphically to the group of units in $ \Delta $. Here by $ \Aut G $ we mean \emph{all} automorphisms of $ G $ as an abelian group, order (reps. valuation) preserving or otherwise. \qed
\end{rmk}

Now we see how we can characterise the order preserving automorphisms.
Let us consider the multiplicative monoid $ T $ of all matrices $ (\sigma_{\alpha\beta}) \in\Delta $ such that
\begin{enumerate}[(i')]
	\item $ (\sigma_{\alpha\beta}) $ is lower triangular;
	\item for all $ \alpha\in\Gamma $, $ \sigma_{\alpha\alpha}\in \Aut (A_\alpha,<) $.
\end{enumerate}
The order on $ G $ is the lexicographic one (cf. Definition~\ref{def:valuation-and-ordering}).
\begin{lemma}\label{lemma:conrad-lemma0}
A matrix $ (\sigma_{\alpha\beta})\in T $ induces, via the correspondence described in Remark~\ref{rmk:isom-End-Delta}, an order preserving endomorphism $ \sigma $ on $ G $.
In particular, an invertible matrix in $ T $ induces an order preserving automorphism of $ G $.
\end{lemma}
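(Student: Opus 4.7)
My approach is to reduce order preservation to a direct computation of the leading coefficient of $\sigma(a)$ for lex-positive $a\in G$. By Remark~\ref{rmk:isom-End-Delta}, a matrix $(\sigma_{\alpha\beta})\in T\subseteq\Delta$ already induces an abelian-group endomorphism $\sigma$ of $G$, so the real content is to check that $a>_{\lex}0$ implies $\sigma(a)>_{\lex}0$, and then to handle invertibility separately.

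For a fixed positive $a=\sum a_\gamma\1_\gamma\in G$, set $\gamma_0:=v(a)=\min\supp(a)$, so that $a_{\gamma_0}>_{\gamma_0}0$ by the definition of the lexicographic order. Using formula~\eqref{eq:formula-matrix-multiplication}, the $\beta$-th coordinate of $\sigma(a)$ is $b_\beta=\sum_{\alpha\in\supp(a)}\sigma_{\alpha\beta}(a_\alpha)$. I will exploit triangularity (i') to argue that $b_\beta=0$ for every $\beta<\gamma_0$: indeed, for such $\beta$ each $\alpha\in\supp(a)$ satisfies $\alpha\geq\gamma_0>\beta$, and (i') forces the corresponding entry $\sigma_{\alpha\beta}$ in the sum to vanish. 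At $\beta=\gamma_0$ the same triangularity leaves only the diagonal summand alive, yielding $b_{\gamma_0}=\sigma_{\gamma_0\gamma_0}(a_{\gamma_0})$. Condition (ii') then gives $\sigma_{\gamma_0\gamma_0}\in\oAut A_{\gamma_0}$, so $b_{\gamma_0}>_{\gamma_0}0$. Consequently $v(\sigma(a))=\gamma_0$ with positive leading coefficient, i.e.\ $\sigma(a)>_{\lex}0$, proving that $\sigma$ is an order-preserving endomorphism.

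For the ``in particular'' clause, suppose $(\sigma_{\alpha\beta})$ has a two-sided inverse in $\Delta$. Under the ring isomorphism $\Delta\simeq\End G$ of Remark~\ref{rmk:isom-End-Delta}, this inverse matrix corresponds to a two-sided inverse of $\sigma$ in $\End G$, so $\sigma$ is already a group automorphism of $G$. A bijective order-preserving homomorphism between ordered abelian groups has order-preserving inverse by a standard one-line argument: if $y>_{\lex}0$ and $\sigma^{-1}(y)\leq_{\lex}0$, then $y=\sigma(\sigma^{-1}(y))\leq_{\lex}0$, a contradiction. Hence $\sigma\in\oAut G$, as required.

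I anticipate no serious obstacle: the entire argument is a leading-term computation plus one elementary observation on invertibility. The only point requiring some care is pinning down the triangularity convention so that, in the relevant direction, the subdiagonal entries really do annihilate the coordinates of $\sigma(a)$ with index strictly less than $\gamma_0$; once that is fixed, everything else is formal.
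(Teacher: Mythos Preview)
Your proof is correct and follows essentially the same approach as the paper's: both compute the $\beta$-th coordinate of $\sigma(a)$, use triangularity (i') to see that it vanishes for $\beta<\gamma_0$ and equals $\sigma_{\gamma_0\gamma_0}(a_{\gamma_0})$ at $\beta=\gamma_0$, and then invoke (ii') for positivity. Your handling of the invertible case is spelled out a bit more fully than in the paper, but the substance is identical.
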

\begin{proof}
Let $ a = \sum_\gamma a_\gamma\1_\gamma >0 $ with $ v(a)=\gamma $. Then
\begin{align*}
\sigma(a) = (a)_\alpha(\sigma_{\alpha\beta}) &= \left(\sum_{\alpha\geq\gamma}\sigma_{\alpha\beta}(a_\alpha) \right)_{\beta\in\Gamma} \\&= \left(\sum_{\alpha\geq\gamma}\sigma_{\alpha\beta}(a_\alpha) \right)_{\beta\geq\gamma} = \sigma_{\gamma\gamma}(a_\gamma)+\left(\sum_{\alpha\geq\gamma}\sigma_{\alpha\beta}(a_\alpha) \right)_{\beta>\gamma}>0
\end{align*}
where the third equality holds because $ \sigma_{\alpha\beta} $ is lower triangular and the final inequality is due to the fact that, by (ii'), $ \sigma_{\gamma\gamma} $ is order preserving.
\end{proof}
\begin{cor}\label{cor:conrad-lemma0}
	Let $ U $ denote the group of units (the invertible matrices) in $ T $.
	Then $ U $ embeds into $ \oAut G $.\qed
\end{cor}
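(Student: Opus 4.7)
The plan is to exhibit the embedding as the restriction of the ring isomorphism $\Delta\simeq\End G$ of Remark~\ref{rmk:isom-End-Delta} to the group of units $U\subseteq T$.

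First I would recall that the correspondence $(\sigma_{\alpha\beta})\mapsto\sigma$ described in Remark~\ref{rmk:isom-End-Delta} is a ring isomorphism $\Phi\colon\Delta\overset{\sim}{\to}\End G$. In particular its restriction to the group of units $\Delta^\times$ of $\Delta$ is a group isomorphism onto $\Aut G$. Since $T$ is a multiplicative sub-monoid of $\Delta$, its group of units $U$ is contained in $\Delta^\times$, and hence $\Phi|_U\colon U\to\Aut G$ is an injective group homomorphism.

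It remains to show that $\Phi(U)\subseteq\oAut G$, i.e.\ that every automorphism of $G$ induced by an invertible matrix in $T$ is order preserving. But this is precisely the content of Lemma~\ref{lemma:conrad-lemma0}: any matrix $(\sigma_{\alpha\beta})\in T$ induces an order preserving endomorphism, and if the matrix is invertible in $T$ then the induced endomorphism is bijective and hence lies in $\oAut G$.

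There is no real obstacle here: the corollary is essentially a bookkeeping statement combining the isomorphism $\Delta\simeq\End G$ with Lemma~\ref{lemma:conrad-lemma0}. The only point worth noting is that we are restricting to units \emph{in $T$}, rather than to all matrices of $T$ whose image under $\Phi$ happens to be invertible in $\End G$; but this is exactly what is needed for the map $U\hookrightarrow\oAut G$ to be a well-defined injective group homomorphism.
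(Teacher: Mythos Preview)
Your proposal is correct and matches the paper's approach: the corollary is stated with a bare \qed\ because it is considered an immediate consequence of Lemma~\ref{lemma:conrad-lemma0} together with the ring isomorphism $\Delta\simeq\End G$ of Remark~\ref{rmk:isom-End-Delta}, and you have simply spelled out those details.
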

\begin{rmk}
From the proof of Lemma~\ref{lemma:conrad-lemma0} it is clear that all $ o $-automorphisms of $ G $ induced by the elements $ (\sigma_{\alpha\beta})\in U $ induce the identity on $ \Gamma $. If, moreover, such an automorphism also induces the identity on each component then it will be an internal automorphism. Again, from the formula in the proof of Lemma~\ref{lemma:conrad-lemma0}, we see that this happens if and only if $ \sigma_{\alpha\alpha}=1 $ for all $ \alpha\in\Gamma $.
\qed
\end{rmk}
\begin{theorem}\label{prop:oAutGamma-G-direct product}
	Let us denote by $ \oAut_\Gamma G $ the order preserving automorphisms of $ G $ that induce the identity on the chain $ \Gamma $. 
	Similarly, let $ \Aut_\Gamma S(G) $ denote the group of automorphisms of the skeleton whose component on $ \Gamma $ is the identity.
	Then we have
	\[
	\oAut_\Gamma G \simeq \Int\Aut G \rtimes \Aut_\Gamma S(G).
	\]
	It follows, in particular, that $ \oAut_\Gamma G $ forms a group.
\end{theorem}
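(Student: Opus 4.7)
The plan is to mirror the proof of Theorem~\ref{hofberger-groups}: produce a short exact sequence whose kernel is $\Int\Aut G$, whose quotient is $\Aut_\Gamma S(G)$, and which splits canonically because the Hahn sum $G = \coprod_\Gamma A_\gamma$ satisfies the canonical lifting property by Corollary~\ref{cor:k-bdd-have-canonical-lifting}.

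First I would show that $\Int\Aut G \subseteq \oAut_\Gamma G$. By Remark~\ref{rmk:internal-by-valuation}, any $\sigma\in\Int\Aut G$ satisfies $v(\sigma(g))=v(g)$ and $\sigma(g)_{v(g)}=g_{v(g)}$ for every $g\in G$; the first equation forces $\sigma_\Gamma=\id_\Gamma$, and together with the second it gives $g>0 \Leftrightarrow \sigma(g)>0$. Hence $\sigma\in\oAut_\Gamma G$. Restricting the homomorphism $\Phi_S$ of \eqref{eq:phi-skeleton} to $\oAut_\Gamma G$ then yields a homomorphism $\Phi_S|_{\oAut_\Gamma G}$ landing inside $\Aut_\Gamma S(G)$, because for $\sigma\in\oAut_\Gamma G$ one has $\Phi_S(\sigma)=[\id_\Gamma;\{\sigma_\gamma:\gamma\in\Gamma\}]$. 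Its kernel equals $\Int\Aut G\cap\oAut_\Gamma G=\Int\Aut G$.

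Next I would build a section by restricting the canonical lift $\Psi^c_S$ of Definition~\ref{def:can-lift-prop-skeleton}. Given $\tau=[\id_\Gamma;\{\tau_\gamma\}]\in\Aut_\Gamma S(G)$, its canonical lift
\[
\widetilde\tau\Bigl(\sum_\gamma g_\gamma\1_\gamma\Bigr) = \sum_\gamma \tau_\gamma(g_\gamma)\1_\gamma
\]
is a well-defined automorphism of $G$ by Corollary~\ref{cor:k-bdd-have-canonical-lifting}. It induces $\id_\Gamma$ on the rank, and since each $\tau_\gamma$ is an order-preserving isomorphism of $A_\gamma$, for any $g>0$ with $v(g)=\gamma$ we have $\widetilde\tau(g)_\gamma=\tau_\gamma(g_\gamma)>0$, so $\widetilde\tau\in\oAut_\Gamma G$. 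Thus $\Psi^c_S$ restricts to an injective homomorphism $\Aut_\Gamma S(G)\hookrightarrow\oAut_\Gamma G$ splitting $\Phi_S|_{\oAut_\Gamma G}$.

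Applying the splitting lemma (\cite[Theorem~3.3]{conrad-note-splitting}, exactly as in the proof of Theorem~\ref{hofberger-groups}) to
\[
\xymatrix{ \Int\Aut G \ar[r] & \oAut_\Gamma G \ar[r]^-{\Phi_S|} & \Aut_\Gamma S(G) \ar@{-->}@/^1pc/[l]^{\Psi^c_S|} }
\]
then yields the decomposition $\oAut_\Gamma G = \Int\Aut G\rtimes \Aut_\Gamma S(G)$, which incidentally confirms that $\oAut_\Gamma G$ is a group. The only subtlety is checking that both $\Int\Aut G$ and the image of the canonical section genuinely sit inside $\oAut_\Gamma G$ — i.e., that internal automorphisms are automatically order preserving and that canonical lifts of elements of $\Aut_\Gamma S(G)$ preserve both the order and the rank — but both verifications are short, using Remark~\ref{rmk:internal-by-valuation} and the fact that skeleton isomorphisms have order-preserving components by definition.
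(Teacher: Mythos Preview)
Your proof is correct but takes a genuinely different route from the paper's. The paper works entirely inside the matrix monoid $T$ set up before Lemma~\ref{lemma:conrad-lemma0}: it identifies $\oAut_\Gamma G$ with the unit group $U\subseteq T$, the internal automorphisms with the unitriangular subgroup $U^1$, and $\Aut_\Gamma S(G)$ with the diagonal subgroup $U^d$, then checks by hand that $U = U^1 \rtimes U^d$ via the standard factorisation of a lower-triangular matrix as unitriangular times diagonal. You instead restrict the abstract machinery of Section~\ref{section-lifting-skeleton}: you restrict $\Phi_S$ to $\oAut_\Gamma G$, observe that the canonical section $\Psi^c_S$ (available by Corollary~\ref{cor:k-bdd-have-canonical-lifting}) carries $\Aut_\Gamma S(G)$ back into $\oAut_\Gamma G$, and invoke the splitting lemma exactly as in Theorem~\ref{hofberger-groups}. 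Your argument is cleaner and would apply verbatim to any Hahn group with the canonical lifting property, not just to Hahn sums; the paper's matrix argument, however, is what Section~\ref{sec:matrices} is building towards, since it produces the concrete identifications $\Int\Aut G\simeq U^1$ and $\Aut_\Gamma S(G)\simeq U^d$ that are immediately reused in the statement and proof of Theorem~\ref{thm:DG-semidirect-enhanced}. If you substitute your proof, you would need to record those two matrix identifications separately before that theorem.
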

\begin{proof}
	Let $ U^1 $ be the subgroup of $ U $ given by all the \emph{unitriaugular} matrices of $ T $ (all the diagonal entries are 1). Then $ \Int\Aut G \simeq U^1 $. When we multiply two diagonal matrices, the elements on the diagonal of the product are the products of the corresponding diagonal entries. Hence $ U^1 $ is a normal subgroup of $ U $. Moreover, every element $ (\sigma_{\alpha\beta})\in U $ is the product of an element in $ U^1 $ and a diagonal matrix: namely $ (\sigma_{\alpha\beta})=(\sigma_{\alpha\beta})^1(\sigma_{\alpha\beta})^d $ where $ (\sigma_{\alpha\beta})^1 $ is obtained by $ (\sigma_{\alpha\beta}) $ replacing all diagonal elements by 1 and $ (\sigma_{\alpha\beta})^d $ is the diagonal matrix with the diagonal entries of $ (\sigma_{\alpha\beta}) $ on its diagonal. And finally, if we denote by $ U^d $ the subgroup of diagonal matrices in $ U $ we have $ U^d\cap U^1=\{ 1 \} $. Hence $ U = U^1\rtimes U^d $.
\end{proof}
The last proposition gives us a matrix description of the group of automorphisms that induce the identity on $ \Gamma $. 
But we know that every automorphism of the skeleton (there might be some where the component on $ \Gamma $ is not trivial) lifts to an automorphism of $ G $, because Hahn sums have the (canonical) lifting property. So, in order to completely describe $ \oAut G $, we need to take into account the lifts of automorphisms of the skeleton with non trivial component on $ \Gamma $. Let $ \Aut_\Gamma'S(G)  $ be a complement of $ \Aut_\Gamma S(G) $ in $\Aut S(G) $. 
%It consists exactly of all those automorphisms of the skeleton that are not the identity on $ \Gamma $, plus the identity $ \id_{S(G)} $. Then this is a subgroup. 
We have:
\begin{theorem}\label{thm:DG-semidirect-enhanced}
	We have
	\[
	\oAut G = \left( U^1 \rtimes U^d \right)
	\rtimes \Aut_\Gamma'S(G).
	\]
\end{theorem}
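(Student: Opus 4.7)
The plan is to extend the inner semidirect product decomposition of Theorem~\ref{prop:oAutGamma-G-direct product} by adjoining canonical lifts of skeleton automorphisms with non-trivial $\Gamma$-component. The key input is that the Hahn sum $G=\coprod_\Gamma A_\gamma$ has the canonical lifting property with respect to the skeleton (Corollary~\ref{cor:k-bdd-have-canonical-lifting}), producing a group homomorphism $\Psi^c_S\colon \Aut S(G)\hookrightarrow \vAut G$. First I would verify that $\Psi^c_S$ in fact takes values in $\oAut G$: for $\tau=[\tau_\Gamma;\{\tau_\gamma\}]\in\Aut S(G)$, each $\tau_\gamma$ is by definition an isomorphism of \emph{ordered} groups, so the canonical lift $\tilde\tau\!\left(\sum g_\gamma\1_\gamma\right)=\sum \tau_\gamma(g_\gamma)\1_{\tau_\Gamma(\gamma)}$ preserves the leading component, and hence the lexicographic order.

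Next, I would show that $\oAut G = \oAut_\Gamma G \cdot \Psi^c_S(\Aut_\Gamma'S(G))$. Given $\sigma\in\oAut G$, decompose its skeleton image $\Phi_S(\sigma)=\alpha\cdot\alpha'$ with $\alpha\in\Aut_\Gamma S(G)$ and $\alpha'\in\Aut_\Gamma' S(G)$, which is possible because $\Aut_\Gamma'S(G)$ is a complement of the (normal) subgroup $\Aut_\Gamma S(G)$ in $\Aut S(G)$. Then $\Phi_S(\sigma\cdot\Psi^c_S(\alpha')^{-1}) = \alpha$ has trivial $\Gamma$-component, so $\sigma\cdot\Psi^c_S(\alpha')^{-1}\in\oAut_\Gamma G$ and $\sigma$ factors as claimed. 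For the semidirect structure I would note that $\oAut_\Gamma G=\ker\Phi_\Gamma$ is normal in $\oAut G$, and $\oAut_\Gamma G\cap \Psi^c_S(\Aut_\Gamma' S(G))=\{\id\}$: indeed, if $\Psi^c_S(\tau)$ induces the identity on $\Gamma$ for some $\tau\in\Aut_\Gamma' S(G)$, then applying $\Phi_S\circ\Psi^c_S=\id$ gives $\tau\in\Aut_\Gamma S(G)\cap\Aut_\Gamma' S(G)=\{\id\}$. Combining this outer split with the inner identification $\oAut_\Gamma G\simeq U^1\rtimes U^d$ supplied by Theorem~\ref{prop:oAutGamma-G-direct product} yields the desired formula.

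The main obstacle is conceptual rather than computational: one must juggle two nested splittings---the inner one realising $\oAut_\Gamma G$ as the matrix group $U^1\rtimes U^d$ via the correspondence of Remark~\ref{rmk:isom-End-Delta}, and the outer one peeling off the canonical lifts in $\Psi^c_S(\Aut_\Gamma' S(G))$. Once $\Psi^c_S$ is known to land in $\oAut G$ and the complement $\Aut_\Gamma' S(G)$ is fixed, the two splittings combine cleanly and the remaining verifications are straightforward applications of the first isomorphism theorem.
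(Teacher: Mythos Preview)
Your proposal is correct and follows essentially the same approach as the paper: reduce to showing $\oAut G = \oAut_\Gamma G \rtimes \Aut_\Gamma' S(G)$ via the canonical lift, then invoke Theorem~\ref{prop:oAutGamma-G-direct product} for the inner factor. Your treatment is in fact slightly more careful than the paper's---you explicitly verify that the canonical lift lands in $\oAut G$ and decompose $\Phi_S(\sigma)$ along the complement $\Aut_\Gamma' S(G)$ rather than lifting the full skeleton image---but the underlying strategy is identical.
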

\begin{proof}
	By Theorem~\ref{prop:oAutGamma-G-direct product} we have
	$ \oAut_\Gamma G \simeq \Int\Aut G \times \Aut_\Gamma S(G) \simeq U^1 \rtimes U^d $ hence it suffices to show that $ \oAut G \simeq \oAut_\Gamma G \rtimes \Aut_\Gamma'S(G) $. We notice immediately that the two groups have trivial intersection and that $ \oAut_\Gamma G $ is normal inside $ \oAut G $. To show that they generate $ \oAut G $ we take an automorphism $ \sigma\in\oAut G $. This will induce an automorphism $ \tau\in\Aut S(G) $ which might or might not fix $ \Gamma $. In any case, if we lift $ \tau $ to a $ \tilde{\tau}\in\oAut G $ we certainly have that $ \sigma\tilde\tau^{-1} $ induces the identity on $ \Gamma $. So $ \sigma = (\sigma\tilde\tau^{-1})\tilde\tau $.
\end{proof}

Theorem~\ref{thm:DG-semidirect-enhanced} improves \cite[Corollary~3.2]{droste-goebel} in two ways: it provides a generalisation from the balanced case to that of a Hahn sum over an arbitrary skeleton and gives a more precise description of $ \oAut G $ by identifying the internal automorphisms within the group of matrices $ U $.

\addcontentsline{toc}{section}{\refname}
\printbibliography

\bigskip

\textsc{Salma Kuhlmann}, Fachbereich Mathematik und Statistik, Universität Konstanz, Germany\\
\textit{E-mail address}: \texttt{salma.kuhlmann@uni-konstanz.de}

\medskip

\textsc{Michele Serra}, Fakultät für Mathematik, Technische Universität Dortmund, Germany\\
\textit{E-mail address}: \texttt{michele.serra@tu-dortmund.de}
\end{document}